\newtheorem{thm}{Theorem}[section]
\newtheorem{lem}[thm]{Lemma}
\newtheorem{prop}[thm]{Proposition}
\theoremstyle{definition}
\newtheorem{defn}[thm]{Definition}
\newtheorem{rem}[thm]{Remark}
\newcommand{\blackboard}[1]{\ensuremath{\mathbb{#1}}}
\newcommand{\Z}{\blackboard{Z}}
\newcommand{\R}{\blackboard{R}}
 \title{Examples of left-orderable and non-left-orderable HNN extensions} 
 \author{Azer Akhmedov, Cody Martin}
 \address{Azer Akhmedov, Department of Mathematics,
North Dakota State University,
Fargo, ND, 58102, USA}
\email{azer.akhmedov@ndsu.edu}
\address{Cody Martin, Department of Mathematics,
North Dakota State University,
Fargo, ND, 58102, USA}
\subjclass[2020]{22E25, 20F60}
\keywords{Nilpotent groups, Orderable groups}
\begin{document}

 \maketitle

 \begin{abstract} We present a broad class of groups that contains all torsion-free nilpotent groups and has the property that, for any group in this class, its HNN extension is left-orderable. We also construct examples of non-left-orderable HNN extensions of left-orderable groups.
 \end{abstract}

 \section{Non-left-orderable HNN extensions of left-orderable groups}
 
 It is well-known that an HNN extension of a torsion-free group is still torsion-free (\cite{LS}, \cite{Bu}). On the other hand, for many classes of groups, existence of a torsion element is the only obstruction to left-orderability; for example, this is the case for the classes of one-relator groups, nilpotent groups, etc. Hence, it is natural to study how left-orderability behaves under an HNN extension.
 
 \medskip
 
 In \cite{BG} (see Example 6.2 there), an example is constructed to show that left-orderability is not preserved under the HNN extension. The example of \cite{BG} is built as an HNN extension of a direct product of a free nilpotent group of class two with the fundamental group of Klein bottle. Thus, it is an HNN extension of a group ``very close'' to a nilpoent one. {\em The major result of this paper} (Theorem \ref{thm:nilpotent}) is to show that such an example (construction)  is impossible for HNN extensions of groups from a certain very broad class which includes torsion-free nilpotent groups. In the opening section, we also present systematic ways of producing non-left-orderable HNN extensions of left-orderable groups. We produce examples of HNN extensions of groups such as non-Abelian free groups and virtually Abelian groups; these examples complete the discussion around the main result (Theorem \ref{thm:nilpotent}) besides appearing to be interesting to us independently. We rely on the following well-known criterion about left-orderability of groups \cite{N}
 
 \begin{prop} \label{thm:criterion} A group $G$ is left-orderable if and only if for all $k\geq 1$ and for all $g_1, \dots , g_k\in G\backslash \{1\}$, there exist $\epsilon _1, \dots , \epsilon _k\in \{-1,1\}$ such that the semigroup of $G$ generated by $g_1^{\epsilon _1}, \dots , g_k^{\epsilon _k}$ does not contain the identity element.
 \end{prop}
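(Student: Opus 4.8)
The plan is to prove both directions of this classical characterization of left-orderability in terms of positive cones. Recall that a left-order on $G$ is a total order $\prec$ invariant under left multiplication, and that such orders correspond bijectively to \emph{positive cones}: subsemigroups $P \subseteq G \setminus \{1\}$ satisfying the partition condition $G = P \sqcup \{1\} \sqcup P^{-1}$. The statement to be proved reformulates the existence of a positive cone as a finitary compatibility condition, so the argument naturally splits into the easy direction (left-orderable implies the semigroup condition) and the harder direction (the semigroup condition implies left-orderable).

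For the forward direction, suppose $G$ carries a left-order with positive cone $P$. Given $g_1, \dots, g_k \in G \setminus \{1\}$, I would set $\epsilon_i = 1$ if $g_i \in P$ and $\epsilon_i = -1$ if $g_i \in P^{-1}$, so that each $g_i^{\epsilon_i} \in P$. Since $P$ is closed under multiplication, the semigroup generated by $g_1^{\epsilon_1}, \dots, g_k^{\epsilon_k}$ is contained in $P$, which by the partition condition does not contain $1$. This direction is immediate.

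The reverse direction is the main obstacle. Assuming the semigroup condition holds for all finite subsets, I want to produce a single positive cone $P$ for all of $G$ at once. The natural approach is a compactness/Zorn argument. First I would show that the condition ``$1 \notin$ the semigroup generated by $\{g^{\epsilon(g)} : g \in S\}$'' for a finite subset $S$ and a choice function $\epsilon$ can be captured by a finitely-satisfiable family: consider the space $\{-1,1\}^{G \setminus \{1\}}$ of all sign assignments, which is compact in the product topology. For each finite $S$, the set of sign assignments whose restriction to $S$ yields a semigroup avoiding $1$ is a nonempty closed set, and these sets have the finite intersection property (a common refinement witnesses finitely many constraints simultaneously, using the hypothesis). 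By compactness their total intersection is nonempty, giving a global sign assignment $\epsilon : G \setminus \{1\} \to \{-1,1\}$ such that the semigroup generated by $\{g^{\epsilon(g)} : g \in G\}$ never contains $1$ on any finite subset, hence never contains $1$.

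Finally I would verify that $P := \{ g \in G \setminus \{1\} : \epsilon(g) = 1 \}$, or more precisely the semigroup it generates, is a genuine positive cone. By construction this semigroup omits $1$, and it is closed under multiplication; the consistency of $\epsilon$ across inverses (forced by the hypothesis applied to pairs $g, g^{-1}$) ensures $G = P \sqcup \{1\} \sqcup P^{-1}$. The subtle point to check carefully is that one may need to enlarge $P$ to a maximal semigroup avoiding $1$ whose complement behaves correctly — here a Zorn's lemma maximality argument shows that a maximal identity-free semigroup $P$ with $P \cup P^{-1} = G \setminus \{1\}$ automatically satisfies $P \cap P^{-1} = \emptyset$ and totality, yielding the desired order $g \prec h \iff g^{-1}h \in P$. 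The heart of the matter is thus combining the compactness extraction of a global sign function with the maximality argument to upgrade a mere identity-free semigroup into a full positive cone.
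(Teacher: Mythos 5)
Your proof is correct, but note that the paper does not prove this proposition at all: it is quoted as a well-known criterion with a citation to Navas's book, and the authors explicitly remark that they only ever use the easy ``only if'' direction (a left-orderable group satisfies the semigroup condition, exactly as in your first paragraph). So there is no in-paper argument to compare against; what you have written is essentially the standard proof from the literature. Your forward direction is exactly right, and your compactness argument for the converse is the classical one: the sets $C_S\subseteq\{-1,1\}^{G\setminus\{1\}}$ of sign assignments whose restriction to a finite $S$ generates a $1$-avoiding semigroup are clopen (they depend on finitely many coordinates), nonempty by hypothesis, and closed under finite intersection via the union of the finite sets, so Tychonoff gives a global assignment $\epsilon$. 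One simplification you should make: the final Zorn's lemma step is unnecessary. Taking $P$ to be the semigroup generated by $\{g^{\epsilon(g)}:g\in G\setminus\{1\}\}$, you already have $1\notin P$ (every element of $P$ lies in the semigroup generated by a finite subset), $P\cup P^{-1}\cup\{1\}=G$ (since each nontrivial $g$ has $g^{\epsilon(g)}\in P$), and $P\cap P^{-1}=\emptyset$ automatically, because $x\in P\cap P^{-1}$ would force $1=x\cdot x^{-1}\in P$. So $P$ is a genuine positive cone with no maximality argument, and $g\prec h\iff g^{-1}h\in P$ is the desired left order.
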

 
 Let us emphasize that we use the obvious ``only if part" of this proposition; the harder ``if part" is not needed.
 
 Given a group $G$ and subgroups $A, B\leq G$ with an isomorphism $\phi :A\to B$, the HNN extension $(G, A, B, t, \phi )$ is defined as the quotient of the free product $G\ast \langle t \rangle $ by the normal closure of the subset $\{tat^{-1}\phi (a)^{-1} \ | \ a\in A\}$. We also write this HNN extension as $(G, A, B, t)$ when $\phi $ is given in the context.

 \begin{prop}\label{them:free} A free group of rank bigger than one admits a non-left-orderable HNN extension.
 \end{prop}
 
 \begin{proof} By Britton's Lemma, it suffices to prove the theorem for the group $\mathbb{F}_2$. Let $a, b$ be the generators of $\mathbb{F}_2$. We can find positive exponents $p_i, q_i, r_i, s_i,  1\leq i\leq 8$ such that the elements $$u_1 = a^{p_1}b^{q_1}, u_2 = a^{p_2}b^{q_2}, u_3 = a^{p_3}b^{q_3}, u_4 = a^{p_4}b^{q_4},$$ \ $$u_5 = a^{p_5}b^{-q_5}, u_6 = a^{p_6}b^{-q_6}, u_7 = a^{p_7}b^{-q_7}, u_8 = a^{p_8}b^{-q_8}$$ generate a free group of rank 8, and so do the elements $$v_1 = a^{r_1}b^{s_1}, v_2 = a^{r_2}b^{-s_2}, v_3 = a^{-r_3}b^{s_3}, v_4 = a^{-r_4}b^{-s_4},$$ \ $$v_5 = a^{r_5}b^{s_5}, v_6 = a^{r_6}b^{-s_6}, v_7 = a^{-r_7}b^{s_7}, v_8 = a^{-r_8}b^{-s_8}.$$ (It suffices to take the sequences $(p_i)_{1\leq i\leq 8}, (q_i)_{1\leq i\leq 8}, (r_i)_{1\leq i\leq 8}, (s_i)_{1\leq i\leq 8}$ to be strictly increasing.) Let $A, B$ be these free groups generated by $u_1, \dots , u_8$ and $v_1, \dots , v_8$ respectively, and $\phi :A\to B$ be the isomorphism such that $\phi (u_i) = v_i, 1\leq i\leq 8.$ 
 
 \medskip
 
 Then, by Proposition \ref{thm:criterion}, the HNN extension $(G, A, B, t)$ where $t(a) = \phi (a)$ for all $a\in A$ is not left-orderable.   
 \end{proof}
 
 \begin{rem} Let us remind that in the case of rank = 1, the claim does not hold anymore since any HNN extension of $\Z $ is isomorphic $\langle t,a \ | \ ta^mt^{-1}=a^n\rangle $ for some non-zero integers $m,n$. All these groups (which include $\Z^2$, $\pi _1(\mathrm{Klein \ bottle}) = \langle a, b \ | \ aba^{-1} = b^{-1}\rangle $, and the solvable Baumslag-Solitar group $BS(1, n)\cong \Z\ltimes \Z[\frac{1}{n}]$), are all left-orderable as torsion-free one-relator groups.
 \end{rem}
 
 \medskip
 
 Using similar ideas, we build a non-left-orderable HNN extension of a left-orderable solvable group. We again rely on the criterion of Proposition \ref{thm:criterion}.

 \medskip
 
  Let $n\geq 2$ and $\Gamma _n$ be a group given by the presentation $$\langle s, x \ | \ [s^n,x]=1, [x, s^ixs^{-i}] = 1, 1\leq i\leq n-1 \rangle .$$
 
 Let $x_i = s^ixs^{-i}, i\in \mathbb{Z}$. Notice that $x_i = x_j$ iff $i\equiv j (\mod n)$. The elements $x_i, 0\leq i\leq n-1$ generate a normal subgroup $N_n$ isomorphic to $\mathbb{Z}^n$ and the quotient by this subgroup is isomorphic to $\mathbb{Z}$. Any element $g$ of $\Gamma _n$ can be written uniquely as $s^iw(x_0, \dots , x_{n-1})$ where $ i\in \mathbb{Z}$ and $w(x_0, \dots , x_{n-1}) = x_0^{p_0}\dots x_{n-1}^{p_{n-1}}$ for some integer exponents $p_0, \dots , p_{n-1}$. $s^iw(x_0, \dots , x_{n-1})$ will be called {\em the canonical form} of $g$. We also write $\Sigma (g) = i  +  p_0 + \dots + p_n$.
 
 \medskip
 
 Let us observe that $\Gamma _n$ is torsion-free. Indeed, if $g$ is a torsion element with canonical form $s^iw(x_0, \dots , x_{n-1})$ as above then for all $k\geq 1$, $$g^k = s^{ik}w_0(x_0, \dots , x_{n-1})w_i(x_0, \dots , x_{n-1})\dots w_{(k-1)i}(x_0, \dots , x_{n-1})$$ where $w_{j}(x_0, \dots , x_{n-1}) = w(x_j, x_{j+1}, \dots , x_{n-1+j})$ hence it follows immediately that either $i =0$; then, since $N_n\cong \mathbb{Z}^n$, we obtain that $w=1$.
 
 \medskip

 It turns out $\Gamma _n$ is left-orderable (which also implies that it is torsion-free). We  introduce a left order $<$ on $\Gamma _n$ as follows: An element $g$ with the canonical form $s^iw(x_0, \dots , x_{n-1})$ as above will be called positive if either $\Sigma (w) > 0$ or $\Sigma (w) = 0$ and $i > 0$.  If $\Sigma (w) = 0$ and $i = 0$, then we are in the group $N_n\cong \mathbb{Z}^n$ and there the order can be defined lexicographically. Then we see that a product of two positive elements is always positive and the inverse of a positive element is not positive. Hence $<$ is a left-order.
 
 \medskip
 
  To state our next proposition we need to introduce some (well-known) terminology. 
 
 \medskip
 
  \begin{defn} Let $G$ be a group generated by a subset $S\subseteq G\backslash \{1\}$ such that for all $x\in G$, if $x\in S$, then $x^{-1}\notin S$ (in particular, $1\notin S$). We say that a non-trivial reduced word $W(x_1, \dots , x_k) = x_1^{n_1}\dots x_k^{n_k}$ is positive in the alphabet $S$ if $x_1, \dots , x_k\in S$ and all exponents $n_i, 1\leq i\leq k$ are positive.  
  \end{defn}
  
  \medskip
  
  \begin{prop} \label{thm:gamma} In the group $\Gamma _n$ let $S_1 = \{s,x\}, S_2 = \{s^{-1},x\}, S_3 = \{s,x^{-1}\}, S_4 = \{s^{-1},x^{-1}\}$. For $n\geq 12$, there exists elements $f_1, \dots, f_4,$ \ $g_1, \dots , g_4\in \Gamma _n$ such that the following conditions hold:
  
  i) $\langle f_1, f_2, f_3, f_4\rangle \cong \langle g_1, g_2, g_3, g_4\rangle \cong \mathbb{Z}^4$, 
  
  ii) The elements $f_1, f_2, f_3, f_4$ can be represented with positive words in the alphabet $S_1$, 
  
  iii) For all $1\leq i\leq 4$, the element $g_i$ can be represented with a positive word in the alphabet $S_i$.
  \end{prop}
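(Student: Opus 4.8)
The plan is to place all eight elements inside a single abelian subgroup of $\Gamma_n$, so that conditions on commutativity become automatic, and then to recover the prescribed positivity from the centrality of $s^n$. First I would record the abelian structure. The defining relation $[s^n,x]=1$ says $s^n$ commutes with $x$, and conjugating by $s^j$ gives $s^n x_j s^{-n}=x_j$, so $s^n$ commutes with every $x_j$ and with $s$; thus $s^n$ is central. Since $\Gamma_n/N_n\cong\mathbb{Z}$ sends $s^n\mapsto n\neq 0$, the group $\langle s^n\rangle$ meets $N_n$ trivially, and therefore
\[ M:=\langle x_0,\dots,x_{n-1},s^n\rangle=N_n\times\langle s^n\rangle\cong\mathbb{Z}^{n+1}, \]
an abelian subgroup in which I will put all the $f_i$ and $g_i$. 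I will use additive coordinates on $M$ coming from the basis $x_0,\dots,x_{n-1},s^n$, writing $e_i$ for the coordinate of $x_i$ and $e_s$ for that of $s^n$.

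The key observation is that for $0\le a\le n$ the four ``balanced'' words $s^{a}xs^{n-a}$, $s^{-a}xs^{-(n-a)}$, $s^{a}x^{-1}s^{n-a}$, $s^{-a}x^{-1}s^{-(n-a)}$ are positive words in the alphabets $S_1,S_2,S_3,S_4$ respectively, yet each lies in $M$. Indeed, using $x_i=s^ixs^{-i}$ and the centrality of $s^n$, a direct computation gives
\[ s^{a}xs^{n-a}=x_a s^n,\ \ s^{-a}xs^{-(n-a)}=x_{-a}s^{-n},\ \ s^{a}x^{-1}s^{n-a}=x_a^{-1}s^n,\ \ s^{-a}x^{-1}s^{-(n-a)}=x_{-a}^{-1}s^{-n}, \]
with coordinates $e_a+e_s$, $e_{-a}-e_s$, $-e_a+e_s$, $-e_{-a}-e_s$ (all indices read mod $n$). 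So each balanced word simultaneously carries the required positivity and lands in the abelian $M$, and its $x$-index can be prescribed freely by the choice of $a\in\{0,\dots,n\}$.

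Next I choose indices. Put $f_i=x_{i-1}s^n=s^{i-1}xs^{n-i+1}$ for $1\le i\le 4$, each a positive word in $S_1$. For the $g_i$ I select balanced words whose coordinates are $g_1=x_4 s^n=e_4+e_s$, $g_2=x_5 s^{-n}=e_5-e_s$, $g_3=x_6^{-1}s^n=-e_6+e_s$, $g_4=x_7^{-1}s^{-n}=-e_7-e_s$, so that $g_i$ is positive in $S_i$; since $n\ge 12>8$ the indices $0,\dots,7$ are distinct mod $n$. All eight elements commute because they lie in $M$, so condition (i) reduces to linear independence over $\mathbb{Z}$. The four $f_i$ have coordinates $e_0+e_s,\dots,e_3+e_s$, which are independent because the $x$-positions $0,1,2,3$ are distinct; likewise reading off the $e_4,e_5,e_6,e_7$ coordinates forces all coefficients to vanish in any relation among the $g_i$, so $\langle f_1,\dots,f_4\rangle\cong\langle g_1,\dots,g_4\rangle\cong\mathbb{Z}^4$ and $f_i\mapsto g_i$ is the desired isomorphism.

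I expect the only genuine obstacle to be reconciling (ii)/(iii) with (i): positivity in a fixed alphabet forces every $s$-exponent to share one sign, which drives the total $s$-exponent away from $0$ and out of $N_n$, whereas commutativity is cheap only inside an abelian subgroup. The balancing trick—forcing the total $s$-exponent to be exactly $\pm n$, so that the element collapses to a central $s^{\pm n}$ times a single $x_i^{\pm 1}$—is precisely what resolves this tension, after which everything is the routine coordinate computation above together with checking that the words are honestly positive (the exponents $a$ and $n-a$ are nonnegative, which is why $0\le a\le n$ is imposed). The hypothesis $n\ge 12$ enters only to guarantee enough distinct indices.
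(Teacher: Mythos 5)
Your proof is correct and follows essentially the same route as the paper's: both choose words whose total $s$-exponent equals $\pm n$ so that they land in the abelian subgroup $N_n\langle s^n\rangle\cong\mathbb{Z}^{n+1}$, and then read off linear independence from the $x_i$-coordinates, your elements (a single $x_i^{\pm 1}$ times the central $s^{\pm n}$) being merely a cleaner choice than the paper's $s^{\pm(n-2^k)}(x^{\pm 1}s^{\pm 1})^{2^k}$. If anything you are slightly more careful than the paper, which asserts the elements lie in $N_n$ when in fact they lie in $N_n\times\langle s^n\rangle$.
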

  
  \medskip
  
  \begin{proof} We define $f_1 = s^{n-1}xs, f_2 = s^{n-2}(xs)^2, f_3 = s^{n-4}(xs)^4, f_4 = s^{n-8}(xs)^8$. Then $f_1, f_2, f_3, f_4$ belong to $N_n$ and generate a subgroup isomorphic to $\mathbb{Z}^4$. We also define $g_1 = s^{n-1}xs, g_2 = s^{n-2}(x^{-1}s)^2, g_3 = s^{4-n}(xs^{-1})^4, f_4 = s^{8-n}(x^{-1}s^{-1})^8$. The elements $g_1, g_2, g_3, g_4$ also belong to $N_n$ and generate a subgroup isomorphic to $\mathbb{Z}^4$. 
  \end{proof}
  
  \medskip
  
  In the above proposition, the condition $n\geq 12$ is not necessarily the best possible. Using Proposition \ref{thm:gamma}, we can now prove the following proposition which establishes the existence of a non-left-orderable HNN extension of a left-orderable virtually Abelian group.
  
  \medskip
 
  \begin{prop} \label{thm:solvable} For all $n\geq 12$, $\Gamma _n$ admits a non-left-orderable HNN extension. 
  \end{prop}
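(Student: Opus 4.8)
The plan is to imitate the construction behind Theorem \ref{them:free}, replacing the two free subgroups of rank $8$ by the two copies of $\Z^4$ supplied by Proposition \ref{thm:gamma}. Put $A=\langle f_1,f_2,f_3,f_4\rangle$ and $B=\langle g_1,g_2,g_3,g_4\rangle$. By part (i) both are free abelian with the $f_i$ (resp.\ $g_i$) a basis, so the assignment $\phi(f_i)=g_i$ extends to an isomorphism $\phi\colon A\to B$, and I may form the HNN extension $\Gamma=(\Gamma _n,A,B,t,\phi)$, whose defining relations are $tf_it^{-1}=g_i$ for $1\le i\le 4$. I will show $\Gamma$ is not left-orderable by producing, via the easy ``only if'' direction of Proposition \ref{thm:criterion}, a finite subset of $\Gamma$ every orientation of which has the identity in the generated semigroup.

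The elements I feed into the criterion are $s,\,x,\,s',\,x'$, where $s'=t^{-1}st$ and $x'=t^{-1}xt$. Passing to these conjugates is the crucial move: it turns each defining relation into a word identity living entirely inside $\langle s,x,s',x'\rangle$, with no free-standing $t$ to obstruct things. Indeed, conjugation by $t^{-1}$ is an automorphism and each $g_i$ is a word $g_i(s,x)$, so $g_i(s',x')=t^{-1}g_i(s,x)\,t=t^{-1}g_it=f_i$, the last equality being the defining relation. Writing $f_i=f_i(s,x)$ as well, this yields the identities
\[
f_i(s,x)\,g_i(s',x')^{-1}=1,\qquad 1\le i\le 4,
\]
together with their inverses $g_i(s',x')\,f_i(s,x)^{-1}=1$. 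Each is a nonempty word in $s^{\pm1},x^{\pm1},s'^{\pm1},x'^{\pm1}$ equal to $1$ in $\Gamma$, and the signs appearing in it are governed by the alphabets of Proposition \ref{thm:gamma}: by (ii) each $f_i(s,x)$ is a positive word in $S_1=\{s,x\}$, and by (iii) each $g_i(s',x')$ is a positive word in the $(s',x')$-analogue of the alphabet $S_i$.

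Now fix an orientation $(\epsilon_s,\epsilon_x,\epsilon_{s'},\epsilon_{x'})\in\{\pm1\}^4$; I must make one of these identities a positive word in $s^{\epsilon_s},x^{\epsilon_x},s'^{\epsilon_{s'}},x'^{\epsilon_{x'}}$. Reading $f_i(s,x)g_i(s',x')^{-1}=1$ as written forces the base signs $(\epsilon_s,\epsilon_x)$ to match the $(s,x)$-sign pattern of $f_i$ and the conjugate signs $(\epsilon_{s'},\epsilon_{x'})$ to match the negative of the $(s',x')$-pattern of $g_i$; reading its inverse forces $(\epsilon_s,\epsilon_x)$ to match the negative of the $f_i$-pattern and $(\epsilon_{s'},\epsilon_{x'})$ the $g_i$-pattern. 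Since the alphabets $S_1,\dots,S_4$ exhaust the four sign combinations of $(s,x)$, the $g_i$ realize every pattern on the conjugate coordinates in either mode, so $(\epsilon_{s'},\epsilon_{x'})$ is always matched; and running the $f_i$ together with their inverses is designed to match every pattern of $(\epsilon_s,\epsilon_x)$. Thus for each orientation some identity collapses to a positive word equal to $1$, the semigroup always contains the identity, and Proposition \ref{thm:criterion} gives non-left-orderability.

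The main obstacle is precisely this orientation bookkeeping. It is a finite check, but it is where the construction is bought and paid for: matching all sign-vectors of $\{s,x,s',x'\}$ requires the $g_i$ to furnish all four alphabets $S_1,\dots,S_4$ on the conjugate side, and it requires the $f_i$, taken together with their inverses, to furnish all four sign-patterns on the base side — so that the antipodal patterns arising from $S_1$ are complemented by a second, non-antipodal family (an $\{s,x^{-1}\}$-type pattern) whose negatives fill in the remaining two. One must also confirm that each witnessing word is a genuine nonempty product of the chosen generators, which is transparent here since it is literally $f_i(s,x)g_i(s',x')^{-1}$ or its inverse, so that it certifies $1$ in the semigroup rather than collapsing vacuously. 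With the sign data of Proposition \ref{thm:gamma} in hand, carrying out this covering is the only step demanding care, and it completes the proof.
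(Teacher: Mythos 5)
Your setup is a faithful, syntactic reformulation of the paper's argument: the paper works semantically with a single left order, asserting that the $tf_it^{-1}=g_i$ must all have one sign because the $f_i$ are positive words in $S_1$, while the $g_i$ cannot all have one sign because they realize all four alphabets; your version pushes the same content through Proposition \ref{thm:criterion} using the four elements $s,x,s'=t^{-1}st,x'=t^{-1}xt$ and the identities $f_i(s,x)=g_i(s',x')$. So the route is the intended one.

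The gap is in the orientation bookkeeping, exactly where you say the construction is ``bought and paid for,'' and the finite check does not go through. Each identity $f_i(s,x)\,g_i(s',x')^{-1}=1$ involves all four letters with definite signs, so it is a positive word for exactly one of the sixteen sign vectors (namely $(\epsilon_s,\epsilon_x)=(+,+)$ and $(\epsilon_{s'},\epsilon_{x'})$ equal to the negative of the $S_i$-pattern), and its inverse covers exactly the antipodal vector; hence your four identities cover at most eight of the sixteen orientations. Concretely, condition (ii) of Proposition \ref{thm:gamma} makes \emph{every} $f_i$ a positive word in $S_1=\{s,x\}$, so the only base patterns realized are $(+,+)$ (from the $f_i$) and $(-,-)$ (from their inverses); the eight orientations with $\epsilon_s\neq\epsilon_x$ --- for instance $s,\,x^{-1},\,s'^{\pm1},\,x'^{\pm1}$ --- are never matched. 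Your sentence claiming that the $f_i$ together with their inverses match every pattern of $(\epsilon_s,\epsilon_x)$ is therefore false for the data actually supplied, and the requirement you correctly isolate at the end (a second family of $f$'s positive in an $\{s,x^{-1}\}$-type alphabet, with corresponding $g$'s again running through all four alphabets) is precisely what Proposition \ref{thm:gamma} does not provide; one can check that the subgroup $\langle f_1,\dots,f_4\rangle$ contains no nontrivial positive word in $\{s,x^{-1}\}$ or $\{s^{-1},x\}$, so the deficiency cannot be repaired by passing to other elements of $A$. Compare the proof of Theorem \ref{them:free}, where exactly this covering is arranged with \emph{eight} pairs: $u_1,\dots,u_4$ of type $(+,+)$, $u_5,\dots,u_8$ of type $(+,-)$, and each block of $v$'s realizing all four types. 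To be fair, the paper's own one-line justification --- that the $tf_it^{-1}$ are ``either all positive or all negative'' --- tacitly assumes $s$ and $x$ have the same sign with respect to the conjugated order and stumbles on the same mixed-sign cases; so you have not overlooked an idea present in the paper, but as written your argument does not dispose of the left orders in which $s$ and $x$ carry opposite signs, and with only the four pairs of Proposition \ref{thm:gamma} this method cannot.
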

  
  \begin{proof} Let $f_1, \dots, f_4, g_1, \dots , g_4\in \Gamma _n$ be elements satisfying conditions 1)-3) of Proposition \ref{thm:gamma}. Let $\phi :\langle f_1, f_2, f_3, f_4\rangle \to \langle g_1, g_2, g_3, g_4\rangle$ be an isomorphism such that $\phi (f_i) = g_i, 1\leq i\leq 4$. 
  
  We consider an HNN extension $$G:= (\Gamma _n, \langle f_1, f_2, f_3, f_4\rangle, \langle g_1, g_2, g_3, g_4\rangle, t )$$ by letting $txt^{-1} = \phi (x)$ for all $x\in  \langle f_1, f_2, f_3, f_4\rangle$. 
  
  For any left-order on $G$, notice that the elements $tf_it^{-1}, 1\leq i\leq 4$ are either all positive or all negative. On the other hand, among the elements $g_i, 1\leq i\leq 4$ at least one is positive and one is negative. This is a contradiction. Hence $G$ is not left-orderable. \end{proof}

 \section{HNN extensions of nilpotent groups}
 
 The aim of this section is to prove that, unlike solvable groups, an HNN extension of a left-orderable nilpotent group is always left-orderable. Let us recall that a nilpotent group is left-orderable iff it is torsion-free; this claim too does not hold for solvable groups. 
 
 \medskip

  We already observed that, by classification, an HNN extension of an infinite cyclic group is left-orderable. The same holds for an HNN extension of any torsion free Abelian group.

  \medskip

   Indeed, it suffices to consider finitely generated Abelian groups, so let $G$ be a finitely generated torsion-free Abelian group, $A, B\leq G$, $\phi :A\to B$ be an isomorphism, and $(G, A, B, t)$ be the HNN extension with respect to the isomorphism $\phi $. Let $G \cong \mathbb{Z}^d$ and $r = \mathrm{rank}A = \mathrm{rank}B$. We will assume that $G = \mathbb{Z}^d$. Then for some $1\leq r\leq d$ and linearly independent vectors $u_1, \dots , u_r$ we have $A = \{c_1u_1+\dots +c_ru_r \ : \ c_i\in \mathbb{Z}, 1\leq i\leq r\}$ and similarly for some linearly independent vectors $v_1, \dots , v_r$ we have $B = \{c_1v_1+\dots +c_rv_r \ : \ c_i\in \mathbb{Z}, 1\leq i\leq r\}$. We let $\overline {G} = \mathbb{R}^d, \overline{A} = \{c_1u_1+\dots +c_ru_r \ : \ c_i\in \mathbb{R}, 1\leq i\leq r\}\cong \R ^r, \overline{B} = \{c_1v_1+\dots +c_rv_r \ : \ c_i\in \mathbb{R}, 1\leq i\leq r\}\cong \R ^r$ and $\overline {\phi }:\overline{A}\to \overline{B}$ be the extension of $\phi :A\to B$ defined as $\overline{\phi }(c_1u_1+\dots +c_ru_r) = c_1\phi (u_1)+\dots + c_r\phi (u_r)$ for all $c_1, \dots , c_r\in \mathbb{R}$.     
  
  \medskip
  
   A key observation here is that even though the isomorphism $\phi :A\to B$ cannot necessarily be extended to $G$, but one can extend the isomorphism $\overline {\phi }:\overline{A}\to \overline{B}$ to some automorphism $F:\overline{G}\to \overline{G}$. Then the HNN extension $(\overline{G}, \overline{A}, \overline{B}, t)$ with respect to the isomorphism $\overline {\phi }:\overline{A}\to \overline{B}$ has a quotient isomorphic to the semidirect product $\mathbb{Z}\ltimes _{F}\overline{G}$ by a normal subgroup $N\trianglelefteq (\overline{G}, \overline{A}, \overline{B}, t)$. By the standard Bass-Serre theory, the normal subgroup $N$ is free because it acts freely in the Bass-Serre graph of the HNN extension (see Proposition 14 in \cite{S} and the discussion after that). Then $N$ is left orderable and since  $N$ and $\mathbb{Z}\ltimes _{F}\overline{G}$ are left-orderable we obtain that $(\overline{G}, \overline{A}, \overline{B}, t)$ is left-orderable (as an extension of a left-orderable group by a left-orderable group). On the other hand, $(\overline{G}, \overline{A}, \overline{B}, t)$ is a quotient of $(\overline{G}, A, B, t)$ by a normal subgroup, which is again by Bass-Serre theory, a free group. Thus, $(\overline{G}, A, B, t)$ is left-orderable.  By Britton's Lemma, $(G, A, B, t)$ is a subgroup of $(\overline{G}, A, B, t)$ hence it is also left-orderable.  
   
   \medskip
   
   We now would like to carry the same argument for any torsion-free nilpotent group. The main issue here is that given a finitely generated torsion-free nilpotent group $\Gamma $, one needs to construct a completion $\overline{\Gamma }$ that would resemble the operation $\mathbb{Z}^d\to \mathbb{R}^d$ so we can try to use the argument in the Abelian case. Before addressing this issue, let us observe the following lemma which, combined with a result of Karras-Solitar, allows to deduce quickly that HNN extensions of torsion-free nilpotent groups are left-orderable.

    \begin{lem}\label{thm:indicable} Let $\Gamma $ be a group such that an HNN extension of any finitely generated subgroup of $\Gamma $ is left-orderable. Then any HNN extension of $\Gamma $ is also left-orderable.
    \end{lem}

      \begin{proof} Since a direct limit of a left-orderable groups is left-orderable, it suffices to prove that for any countable subgroup $\Gamma '$, an HNN extension $(\Gamma ', A, B, t)$ of $\Gamma '$ is also left-orderable. Let $\Gamma _n, n\geq 1$ be subgroups of $\Gamma '$ such that $\Gamma _1\leq \Gamma _2\leq ...$, $\displaystyle \mathop{\cup }_{n\geq 1}\Gamma _n = \Gamma '$ and if $A_n = \Gamma _n\cap A$, then $\Gamma _n\cap B \supseteq tA_nt^{-1}$. Let $B_n = tA_nt^{-1}, n\geq 1$. Then $(\Gamma _m, A_n, B_n,t)$ is left-orderable by assumption, for all $m\geq n$. Hence, $(\Gamma ', A_n, B_n,t)$ is left-orderable. On the other hand, for all $n\geq 1$, $(\Gamma ', A_{n+1}, B_{n+1},t)$ is a quotient of $(\Gamma ', A_n, B_n,t)$ and $(\Gamma ', A, B,t)$ is obtained from $(\Gamma ', A_1, B_1,t)$ by successively adding all the relations of $(\Gamma ', A_n, B_n,t), n\geq 2$ and obtaining a left-orderable group at each step. Hence $(\Gamma ', A, B,t)$ is left-orderable. 

      \end{proof} 
   
   \medskip

   Now, to see that HNN extensions of torsion-free nilpotent groups are left-orderable, it remains to recall that an HNN extension of a finitely generated torsion-free nilpotent group is locally indicable (see \cite{KS}, Corollary on page 632), hence left-orderable, and then use Lemma \ref{thm:indicable}. We will take a different approach which allows to establish left-orderability of HNN extension of groups from a very broad class.  Namely, let $\mathcal{P}$ be the class of left-orederable groups $\Gamma $ such that for every finitely generated subgroup $G\leq \Gamma $ and for all finitely generated isomorphic subgroups $A, B\leq G$ with an isomorphism $\phi :A\to B$, the isomorphism $\phi $ extends to an isomorphism $\overline{\phi }:\overline{A}\to \overline{B}$, where $\overline{A},\overline{B}$ are subgroups of a left-orderable group $ \overline{G}$ such that $G\leq \overline{G}$ and the isomorphism $\overline{\phi }$ extends to an isomorphism $F:\overline{G}\to \overline{G}$. Let us emphasize that in this definition of class $\mathcal{P}$, we have mimicked the process of establishing that a torsion-free Abelian group belongs to $\mathcal{P}$ (and for torsion-free nilpotent groups, it will be a similar process). However, the definition of class $\mathcal{P}$ can be simplified as follows, by skipping the intermediate step groups $\overline{A}$ and $\overline{B}$: a left-orderable group $\Gamma $ is in class $\mathcal{P}$, if for every finitely generated subgroup $G\leq \Gamma $ and for all finitely generated isomorphic subgroups $A, B\leq G$ with an isomorphism $\phi :A\to B$, the isomorphism $\phi $ extends to an isomorphism $F:\overline{G}\to \overline{G}$, where $\overline{G}$ is left-orderable and $G\leq \overline{G}$. By a similar argument as in the case of torsion-free Abelian groups, we will show that for any group $\Gamma $ of the class $\mathcal{P}$, any HNN extension of $\Gamma $ is left-orderable, but most of our efforts will be on proving that torsion-free nilpotent groups belong to $\mathcal{P}$ (of course, the class $\mathcal{P}$ is very rich and much broader than torsion-free nilpotent groups).   

    \medskip

   Let $\mathcal{R}$ be a commutative ring with identity and $n\geq 1$. We let $U_n(\mathcal{R})$ be the group of $n\times n$  upper-triangular matrices with 1's on the diagonal. The cases $\mathcal{R} = \mathbb{R}$ and $\mathcal{R} = \mathbb{Z}$ will be the most interesting to us.
   
   \medskip
   
   It is well-known that any finitely generated torsion-free nilpotent group $\Gamma $ embeds in $U_n(\mathbb{Z})$ for some $n\geq 1$. The Mal'cev completion of $U_n(\mathbb{Z})$ is $U_n(\mathbb{R})$ and the Mal'cev completion of $\mathbb{Z}^n$ is $\mathbb{R}^n$ (See \cite{M, W})  \footnote{in the literature, the term {\em Mal'cev completion} is used for some other related operations as well.}, however, given an isomorphism $\phi :A\to B$ of subgroups of $U_n(\mathbb{Z})$, although it induces an isomorphism $\overline {\phi }:\overline{A}\to \overline{B}$ but one cannot necessarily extend this isomorphism to the entire $\overline{G}$. For example, for $n=3$, the group $U_3(\mathbb{Z})$ is isomorphic to the Heisenberg group $$\langle x, y, z \ | \ z=[x,y], [x,z]=[y,z]=1 \rangle $$ and if we let $A = \langle x \rangle , B = \langle z \rangle $ and $\phi (x) = z$, then this isomorphism cannot be extended to the isomorphism of $U_3(\mathbb{Z})$ (or $U_3(\mathbb{R}))$. Thus, we need to define a completion of $\Gamma $ other than the Mal'cev completion. 
   
   \medskip
   
    Let $X_{n,i}, 1\leq i\leq n-1$ be the matrix of $U_n(\mathbb{Z})$ where all off-diagonal entries are zero except that the $(i+1,i)$-th entry is equal to 1. In order to define a more suitable completion of  $U_n(\mathbb{Z})$ we will extend it first, and at the end we will obtain a completion which is "infinite-dimensional". Let $U_{\infty }(\mathbb{Z})$ be a group generated by $x_k, k\in \mathbb{Z}$ such that for all $k\in \mathbb{Z}, n\geq 1$ the subgroup generated by $x_{k+1}, \dots , x_{k+n-1}$ is isomorphic to $U_n(\mathbb{Z})$ through the isomorphism $f(x_{k+j}) = X_{n,j}, 1\leq j\leq n-1$. Notice that $U_{\infty }(\mathbb{Z})$ is well-defined in this way and contains isomorphic copies of all $U_n(\mathbb{Z}), n\geq 2$. This group can be viewed as the group of infinite sized integral unipotent matrices. But to achieve our goal, we extend $U_{\infty }(\mathbb{Z})$ further as follows. 
    
    \medskip
    
    Let us first observe that in the group $U_n(\mathbb{Z})$ viewed as the group of upper triangular unipotent integral matrices, $[x_i, x_j] = 1$ if $|i-j|\geq 2$ and for all $1\leq i\leq n-2$, $[x_i, x_{i+1}]$ is a unipotent matrix with all the off-diagonal entries zero, except the $(i+2, i)$-entry equals 1. Thus the elements $[x_i, x_{i+1}], 1\leq i\leq n-2$ generate a subgroup isomorphic to $U_{n-1}(\mathbb{Z})$ with an isomorphism $x_i\to [x_i, x_{i+1}], 1\leq i\leq n-2$. Similarly, in the group $U_{\infty }(\mathbb{Z})$, the elements $[x_i, x_{i+1}], i\in \mathbb{Z}$ generate a subgroup isomorphic to $U_{\infty }(\mathbb{Z})$, and the homomorphism $f:U_{\infty }(\mathbb{Z})\to U_{\infty }(\mathbb{Z})$ defined as $f(x_i) = [x_i, x_{i+1}], i\in \mathbb{Z}$ (it is sufficient to define it on the generators) establishes this isomorphism.     
    
    \medskip
    
    The group $U_{\infty }(\mathbb{Z})$ is a direct limit of the groups $U_n(\mathbb{Z}), n\geq 1$. More precisely, let $H_n, n\geq 1$ be the subgroup of $U_{\infty }(\mathbb{Z})$ generated by $x_{-n}, x_{-n+1}, \dots , x_{n-1}, x_n$. Then $H_n$ is isomorphic to $U_{2n+1}(\mathbb{Z})$, and $U_{\infty }(\mathbb{Z})$ is a direct limit of the sequence $H_n, n\geq 1$. Let also $K_n$ be the subgroup of $H_n$ generated by $x_1, \dots , x_n$. Then $K_n$ is isomorphic $U_n(\mathbb{Z})$.
    
\medskip 
    
    In our construction of the completion, we will use a direct limit of groups each isomorphic to $U_{\infty }(\mathbb{Z})$. Let $\Gamma _k, k\in \mathbb{Z}$ be a group generated by $z_{k,n}, n\in \mathbb{Z}$ with an isomorphism $g_k:\Gamma _k\to U_{\infty }(\mathbb{Z})$ such that $g_k(z_{k,n}) = x_n$. We have  $\dots \leq \Gamma _{-1}\leq \Gamma _0\leq \Gamma _1\leq \Gamma _2\leq \dots $ and $[z_{k,n}, z_{k,n+1}] = z_{k-1,n}$ for all $k, n\in \mathbb{Z}$. This defines an isomorphic embedding $g_{k,k+1}:\Gamma _k\to \Gamma _{k+1}, k\in \mathbb{Z}$ where $g_{k,k+1}(z_{k,n}) = z_{k+1,n}$. These inclusions define a direct limit $\mathcal{U}$ of $\Gamma _k, k\in \mathbb{Z}$. The maps $g_{k,k+1}$ induce a shift isomorphism $\Psi  : \mathcal{U}\to \mathcal{U}$, so, in particular, $\Psi (x) = g_{k,k+1}(x)$ for all $x\in \Gamma _k, k\in \mathbb{Z}$. For all $(r,s)\in \Z^2$, it is useful to consider $\theta ^{(r,s)}(z_{k,n}) = z_{k+r,n+s}$ and observe that $\theta ^{(r,s)}$ defines an isomorphism of $\mathcal{U}$ to itself. 
    
\medskip 

    In defining the completion $\overline{\mathcal{U}}$, first, let us recall the following facts about lattices of simply connected nilpotent Lie groups \cite{R}. 

\begin{prop} Let $G$ be simply connected nilpotent Lie group, $\Gamma $ be a discrete subgroup of $G$. The following are equivalent:

 (i) $\Gamma $ is a lattice of $G$;

 (ii) $\Gamma $ is Zariski dense in $G$;

 (iii) $\Gamma $ is not contained in any proper connected closed subgroup of $G$;

 (iv) $\Gamma $ is co-compact in $G$. 
\end{prop}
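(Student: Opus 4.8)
The plan is to pass from $G$ to its Lie algebra via the exponential map and to exploit the unipotent algebraic structure of $G$. Since $G$ is simply connected nilpotent, $\exp:\mathfrak{g}\to G$ is a diffeomorphism, and the Baker--Campbell--Hausdorff formula realizes $G$ as a unipotent real algebraic group whose underlying variety is $\mathfrak{g}\cong\R^{\dim G}$. Under this correspondence the connected closed (Lie) subgroups of $G$ are exactly the images $\exp(\mathfrak{h})$ of Lie subalgebras $\mathfrak{h}\leq\mathfrak{g}$, and each such subgroup is a unipotent algebraic subgroup, hence automatically connected and closed in both the Zariski and the Hausdorff topologies. With this dictionary, (ii)$\Leftrightarrow$(iii) is immediate: the Zariski closure $\overline{\Gamma}$ is an algebraic, hence connected and closed, subgroup containing $\Gamma$, so $\Gamma$ lies inside a proper connected closed subgroup precisely when $\overline{\Gamma}\neq G$, i.e. precisely when $\Gamma$ fails to be Zariski dense.

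Next I would dispatch (i)$\Leftrightarrow$(iv). A nilpotent Lie group is unimodular, so a discrete subgroup is a lattice exactly when $G/\Gamma$ carries a finite invariant volume; that a cocompact discrete subgroup has finite covolume follows by integrating over a compact fundamental domain, giving (iv)$\Rightarrow$(i). The converse (i)$\Rightarrow$(iv) is Mal'cev's theorem that lattices in simply connected nilpotent Lie groups are automatically uniform, which I would cite from \cite{R} (or reprove by the same induction on the center described below). For (i)$\Rightarrow$(iii) I would use the standard consequence of Mal'cev theory that the vectors $\log\gamma$, $\gamma\in\Gamma$, span $\mathfrak{g}$ over $\R$ whenever $\Gamma$ is a lattice; since $\Gamma\subseteq H=\exp(\mathfrak{h})$ forces $\log\Gamma\subseteq\mathfrak{h}$, a lattice cannot be contained in a proper connected closed subgroup.

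The remaining and genuinely substantive implication is (iii)$\Rightarrow$(iv), which I would prove by induction on $\dim G$ through the center, the case $\dim G\leq 1$ being clear. Let $Z=Z(G)\cong\R^{m}$ with Lie algebra $\mathfrak{z}$, and let $\pi:G\to G/Z$ be the projection onto the lower-dimensional simply connected nilpotent group $G/Z$. The scheme is: (a) show that Zariski density of $\Gamma$ makes $\Gamma\cap Z$ a lattice in $Z$, the needed central elements being supplied by commutators of elements of $\Gamma$ whose $\pi$-images span $G/Z$; (b) deduce that $\pi(\Gamma)$ is discrete (because $\Gamma\cap Z$ is cocompact in $Z$) and Zariski dense in $G/Z$ (Zariski density passes to quotients), so by the inductive hypothesis $\pi(\Gamma)$ is a cocompact lattice in $G/Z$; and (c) assemble a compact fundamental domain for $\pi(\Gamma)$ in $G/Z$ with one for $\Gamma\cap Z$ in $Z$ into a compact fundamental domain for $\Gamma$ in $G$.

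I expect step (a) to be the main obstacle: one must control $\Gamma\cap Z$ and show it is a \emph{full} lattice in $Z$ rather than a lower-rank discrete subgroup, since a priori the intersection with the center could be too small and the projection could then accumulate. This is exactly where Zariski density is used, as it prevents $\Gamma$ from collapsing into the preimage of a proper subgroup of $G/Z$ or into a proper subgroup of $Z$; descending through the lower central series makes this precise. Once the induction closes, the chain (i)$\Rightarrow$(iii)$\Rightarrow$(iv)$\Rightarrow$(i) together with (ii)$\Leftrightarrow$(iii) establishes all four equivalences.
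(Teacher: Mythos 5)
The paper does not actually prove this proposition: it is recalled as a known package of facts about lattices in simply connected nilpotent Lie groups and attributed to Raghunathan's book \cite{R} (it is essentially Theorem~2.1 there, together with the Zariski-density characterization of lattices in unipotent groups). So there is no in-paper argument to measure yours against; what you have written is an outline of the standard textbook proof. Most of it is sound as stated: the dictionary under $\exp$ between connected closed subgroups and Lie subalgebras, together with the fact that subgroups of unipotent groups have connected Zariski closures in characteristic zero, does give (ii)$\Leftrightarrow$(iii); (iv)$\Rightarrow$(i) by unimodularity and (i)$\Rightarrow$(iii) via the span of $\log\Gamma$ are standard and correct.

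The genuine gap is exactly the step you flag yourself, namely (a) in the implication (iii)$\Rightarrow$(iv), and the mechanism you propose for it does not work as described. You suggest that commutators of elements of $\Gamma$ supply a full lattice in $Z=Z(G)$; but the center need not be contained in the derived subgroup, so commutators cannot reach all of $Z$. Concretely, for $G=\R\times H$ with $H$ the Heisenberg group, $Z(G)\cong\R^2$ while $[G,G]$ is one-dimensional, so the subgroup of $Z(G)$ generated by commutators of elements of $\Gamma$ has rank at most one and can never be cocompact in $Z(G)$. The statement that $\Gamma\cap Z$ is a lattice in $Z$ for every Zariski-dense discrete $\Gamma$ is true, but it is itself one of the core results of Mal'cev--Raghunathan theory and is not easier than the proposition you are trying to prove; your parenthetical ``descending through the lower central series makes this precise'' points at the right repair (one inducts using the last nontrivial term of the lower central series, or the closure of $\Gamma Z$, rather than all of $Z$ at once), but that is precisely the content that still has to be supplied. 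As the proposition is used in the paper only as a citation, the appropriate resolution is the one the paper takes: quote \cite{R} rather than reprove it.
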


    \begin{defn} Let $m\geq 2$. For any subset $\Omega \subseteq U_{m}(\mathbb{Z})$, we define $Span(\Omega ) = \langle \Omega \rangle ^{Z}$ where the latter denotes the Zariski closure. For example, $Span ( U_{m}(\mathbb{Z})) =  U_{m}(\mathbb{R})$. Then, for any subset $\Omega \subseteq U_{\infty }(\mathbb{Z})$ we let $$Span(\Omega ) = \displaystyle \mathop{\cup }_{n\geq 1}Span(\Omega \cap H_n).$$ Then, for any subset $\Omega \subseteq \mathcal{U}$ we define $Span(\Omega ) = \displaystyle \mathop{\cup }_{k\geq 1}Span(\Omega \cap \Gamma _k).$ Finally, we define $\overline {\mathcal{U}} = Span (\mathcal{U})$. 
\end{defn}
    
    \medskip

 The Lie subgroups of $U_n(\mathbb{R})$ (hence of $\overline{\mathcal{U}}$) are simply connected (indeed, contractible, as the exponential map determines a homeomorphism to $\R^d$ with $d$ being the dimension of the group) thus its isomorphism type can be determined at the level of Lie algebras. The Lie algebra of every Lie subgroup of $\overline{\mathcal{U}}$ is a finite-dimensional nilpotent Lie algebra. On the other hand, by Engel's Theorem, for every finite-dimensional nilpotent Lie algebra $\mathbf{g}$ with the underlying vector space $V$, there exists {\em an associated flag} $\mathcal{F}(\mathbf{g})$ in the form $\{0\} = V_0\leq V_1\leq \dots \leq V_n = V$ where $\dim V_i = i, 0\leq i\leq n$ and for all $x\in \mathbf{g}, 1\leq i \leq n$, $ad(x)(V_i)\subseteq V_{i-1}$. Thus, $\mathbf{g}$ can be faithfully represented by strictly upper-triangular matrices with respect to some basis of $V$. If $\mathbf{g}, \mathbf{h}$ are finite-dimensional nilpotent Lie algebras and $\phi :\mathbf{g}\to \mathbf{h}$ a Lie algebra isomorphism, then $\mathcal{H} = \phi (\mathcal{F})$ will be an associated flag of $\mathbf{h}$. On the other hand, if $\mathbf{g}$ is a finite-dimensional nilpotent Lie algebra with underlying vector space $V$ and $I$ is an ideal of $\mathbf{g}$ faithfully represented in $\mathbf{gl}(V_0)$ with strictly upper triangular matrices with respect to a basis of a proper subspace $V_0$,  then by inductive process as in the proof of Engels' Theorem, it follows that we can extend the basis of $V_0$ to a basis of $V$ such that $\mathbf{g}$ is faithfully represented with strictly upper triangular matrices. Based on this observation, one can show that any Lie group isomorphism $\Phi :G\to H$ between finite-dimensional nilpotent Lie subgroups of $\overline{\mathcal{U}}$ can be extended to the group automorphism of $\overline{\mathcal{U}}$. We will provide a direct and explicit proof of this extension. First, let the Lie algebra \( \overline{\mathfrak{u}} \) as the Lie algebra of \emph{finitary strictly upper-triangular infinite matrices} indexed by the integers \( \mathbb{Z} \):
\[
\overline{\mathfrak{u}} := \left\{ X = (x_{ij})_{i,j \in \mathbb{Z}} \; \middle| \;
\begin{array}{l}
x_{ij} = 0 \text{ whenever } i \ge j, \text{and } X \text{ has finite} \\
\text{support, i.e., } \# \{ (i,j) : x_{ij} \neq 0 \} < \infty
\end{array}
\right\}.
\]

In other words, \( \overline {\mathfrak{u}} \) is the union of all finite-dimensional strictly upper-triangular matrix Lie algebras
\[\overline{\mathfrak{u}} = \bigcup_{\substack{a,b \in \mathbb{Z} \\ a < b}} \mathfrak{u}_{[a,b]},
\]
where
\[
\mathfrak{u}_{[a,b]} := \left\{ X \in \overline{\mathfrak{u}} \; \middle| \; x_{ij} = 0 \text{ if } i,j \notin [a,b] \right\}.
\]

This definition allows embeddings of finite-dimensional Lie algebras \( \mathfrak{u}_n \):=  \( \mathfrak{u}_{I_n}\) with \(I_n = \{1, 2, \dots , n\} \) into \( \overline{\mathfrak{u}} \) supported on arbitrary finite intervals of \( \mathbb{Z} \), providing a universal environment for all finite-dimensional strictly upper-triangular Lie algebras. In the proof of the following proposition, we follow the technique used in \cite{Be}.

 \medskip 
 
  \begin{prop}\label{prop:extension}  Let  \( \mathfrak{g}, \mathfrak{h} \subseteq \mathfrak{u}_n \) be finite-dimensional nilpotent Lie algebras embedded in \(\overline{ \mathfrak{u}} \), and let \( \phi : \mathfrak{g} \to \mathfrak{h} \) be a Lie algebra isomorphism. Then there exists an automorphism \( \Phi \in \mathrm{Aut}(\overline{\mathfrak{u}}) \) such that
\[
\Phi|_{\mathfrak{g}} = \phi.
\]
\end{prop}

\begin{proof}
Since \( \mathfrak{g} \) and \( \mathfrak{h} \) are finite-dimensional and isomorphic, choose disjoint finite index intervals \( I = [i_1, i_2] \subset \mathbb{Z} \) and \( J = [j_1, j_2] \subset \mathbb{Z} \), both of length \( n \), such that $|x-y| > n$ for all $x\in I, y\in J$ and 
\[
\mathfrak{g} \hookrightarrow \mathfrak{u}_I,\quad \mathfrak{h} \hookrightarrow \mathfrak{u}_J,
\]
where \( \mathfrak{u}_I \subset \overline{\mathfrak{u}} \) denotes the subalgebra of matrices supported on rows and columns in \( I \), and similarly for \( \mathfrak{u}_J \).

\medskip 

Define \( \iota_{\mathfrak{g}} : \mathfrak{g} \hookrightarrow \mathfrak{u}_I \) and \( \iota_{\mathfrak{h}} : \mathfrak{h} \hookrightarrow \mathfrak{u}_J \) as the chosen embeddings and let $\psi : \iota_{\mathfrak{g}}(\mathfrak{g})\to \iota_{\mathfrak{h}}(\mathfrak{h})$ defined as $\psi =                \iota_{\mathfrak{h}}\circ \phi \circ \iota_{\mathfrak{g}}^{-1}$. It suffices to prove that $\psi $ can be lifted to an isomorphism of \(\overline{ \mathfrak{u}} \). 

\medskip 

 Since the index sets $I$ and $J$ have length $n$ and are at a distance at least $n$, the isomorphism $\psi $ can be lifted to $\psi ':\mathfrak{u}_I\to \theta ^{k,l}(\mathfrak{u}_J)$ for some $0\leq k, l\leq n-1$.  

\medskip 

 Let also $\mathcal{B}_0, \mathcal{B}_1, \mathcal{B}$ be bases of $\iota_{\mathfrak{g}}(\mathfrak{g}), \mathfrak{u}_I, \overline{\mathfrak{u}}$ respectively, such that $\mathcal{B}_0\subseteq \mathcal{B}_1\subset \mathcal{B}$. Then we extend the isomorphism $\psi '$ (which is already defined in $\mathcal{B}_1$) inductively along the basis $\mathcal{B}$ and obtain an isomorphism $\Psi : \overline{\mathfrak{u}}\to \overline{\mathfrak{u}}$. This extension is straightforward if we use the following standard fact: For all $k\geq 1$, let $\mathcal{N}_k$ denote the Lie algebra of $k\times k$ real strictly upper-triangular matrices. $\mathcal{N}_k$ has a standard embedding in $\mathcal{N}_{k+1}$ as the sub-algebra of upper-left block of size $k\times k$.  Then any Lie algebra automorphism $\mathcal{N}_k\to \mathcal{N}_k$ lifts to an automorphism $\mathcal{N}_{k+1}\to \mathcal{N}_{k+1}$. Indeed, this fact also suggests a specific natural order to extend the basis $\mathcal{B}_1$ to $\mathcal{B}$.

\end{proof}
  
\medskip 
    
    We can now state and prove the following.
    
    \begin{thm} \label{thm:nilpotent} a) An HNN extension of any group of class $\mathcal{P}$ is left-orderable; 
    
     b) Any torsion-free nilpotent group belongs to class $\mathcal{P}$.
    \end{thm}
    
    \medskip
    
    \begin{proof} We will start with the proof of part b). Let $\Gamma $ be a torsion-free nilpotent group. It is well-known that $\Gamma $ is left-orderable (in fact, bi-orderable). Indeed, it suffices to prove this only for finitely generated subgroups, and any such subgroup embeds into $U_m(\mathbb{Z})$ for some $m\geq 2$. The latter admits an easy bi-order. Indeed, more generally, we define a matrix $A = (a_{i,j})_{1\leq i, j\leq n}\in U_m(\mathbb{R})$ as positive if $d$ is the smallest positive integer such that $a_{i,j} \neq 0$, for some $i, j\geq 1$ with $i+j = d$, moreover, for this $d$, if $p$ is the smallest positive integer with $p+q = d$ and $a_{p,q}\neq 0$, then $a_{p,q} > 0$. One easily checks that this is in fact a genuine left-order (and even a bi-order). Then $U_{\infty }(\mathbb{R})$ is also bi-orderable as a direct limit of $U_{m}(\mathbb{R}), m\geq 1$ and so is $\overline{\mathcal{U}}$.
    
    \medskip
    
    Let us assume that $\Gamma $ is finitely generated, $A, B\leq \Gamma $ are finitely generated subgroups of $\Gamma $ and $\phi :A\to B$ be an isomorphism. $\Gamma $ embeds in $U_{\infty }(\mathbb{Z})$ and the latter is a subgroup of $G = U_{\infty }(\mathbb{R})$. 
    
    \medskip
    
    The isomorphism $\phi :A\to B$ cannot necessarily be extended to $G$, but one can extend the isomorphism $\overline {\phi }:Span(A)\to Span(B)$ to some $F:\overline{\mathcal{U}}\to \overline{\mathcal{U}}$ where $\overline {\phi }$ is an extension of $\phi $ by Mostow Strong Rigidity Theorem for lattices in solvable Lie groups \cite{R} and by Proposition \ref{prop:extension}.
    
    \medskip 

    Now, for part a), let $\Gamma $ be a group of class $\mathcal{P}$. We let $G$ be a finitely generated subgroup of $\Gamma $, $A, B$ be finitely generated subgroups of $G$ with an isomorphism $\phi:A\to B$ and its extension $\overline{\phi }:\overline{A}\to \overline{B}$ such that $\overline{A}, \overline{B}$ are subgroups of a left-orderable group $\overline{G}$ such that $\overline{G}$ contains $G$ as a subgroup and $\overline{\phi }$ extends to an isomorphism $F:\overline{G}\to \overline{G}$. 

    \medskip 

     Then the HNN extension $(\overline{G}, \overline{A}, \overline{B}, t)$ with respect to the isomorphism $\overline {\phi }:\overline{A}\to \overline{B}$ has a quotient isomorphic to the semidirect product $\mathbb{Z}\ltimes _{F}\overline{G}$ by a free normal subgroup $N\trianglelefteq (\overline{G}, \overline{A}, \overline{B}, t)$ (the normal subgroup $N$ is free again by the Bass-Serre theory). Since $N$ and $\mathbb{Z}\ltimes _{F}\overline{G}$ are left-orderable we obtain that $(\overline{G}, \overline{A}, \overline{B}, t)$ is left-orderable (as an extension of a left-orderable group by a left-orderable group). Then, $(\overline{G}, A, B, t)$ is also left-orderable as an extension of $(\overline{G}, \overline{A}, \overline{B}, t)$ by a free normal subgroup. By Britton's Lemma, $(\Gamma , A, B, t)$ is a subgroup of $(\overline{G}, A, B, t)$ hence it is also left-orderable. 
     
    \end{proof}

 \medskip
 
 We would like to end this section with a torsion-free non-left-orderable example which will contain a class two nilpotent group as an index two subgroup; indeed, it will contain a subgroup of Heisenberg group $H$ of $3\times 3$ integral unipotent matrices. This might be one of the simplest (smallest) examples of a torsion-free non-left-orderable group in the literature. (Another example is the Hantzsche-Wendt group, a specific crystallographic group that is virtually $\Z ^3$; see \cite{BRW}) It also reaffirms that torsion-freeness does not imply left-orderability in the class of polycyclic groups. 
 
 \medskip
 
 Let $$\Gamma = \langle t, u, v \ | \ [u,v] = t^4, tut^{-1} = u^{-1}, tvt^{-1} = v^{-1}\rangle .$$
 
 The group $\Gamma $ is related to the Heisenberg group $$H = \langle x, y, z \ | \ [x, y] = z, [z, x] = [z, y] = 1\rangle .$$
 
 Any element of $H$ can be written uniquely as $x^my^nz^k$ where $m, n, k\in \Z$. $H$ is bi-orderable. The elements $x^2, y, z$ generate an index two subgroup $H_0$ of $H$. 
 
 \medskip
 
 The group $\Gamma $ will have an index-two subgroup isomorphic to $H_0$. We let $u = x^2, v = y, t^2 = z$. Let also $G$ be a group given by the following presentation $$G = \langle t, x, y, z \ | \ [x, y] = z, [z, x] = [z, y] = 1, t^2 = z, txt^{-1} = x^{-1}, tyt^{-1} = y^{-1}\rangle .$$ Then $\Gamma $ is a subgroup of $G$ generated by $t, x^2, y$.
 
 \medskip
 
 \begin{prop} $\Gamma $ is torsion-free and non-left-orderable.  
 \end{prop}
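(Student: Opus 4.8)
The plan is to treat the two assertions separately: torsion-freeness via a normal form together with a one-line computation of squares, and non-left-orderability via the criterion of Proposition~\ref{thm:criterion} applied to an explicit family of positive words.

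For torsion-freeness I would first record that every element of $\Gamma$ can be written as $u^mv^nt^a$ with $(m,n,a)\in\Z^3$; this follows from the embedding $\Gamma\hookrightarrow G$ and the evident normal form $x^iy^jt^k$ of $G$, or directly by viewing $\Gamma$ as an extension $1\to\Gamma_0\to\Gamma\to\Z/2\to 1$, where $\Gamma_0=\langle u,v,t^2\rangle$ is the kernel of the homomorphism $t\mapsto 1$, $u,v\mapsto 0$. Under the embedding, $\Gamma_0=\langle x^2,y,z\rangle=H_0$, which is torsion-free because $H$ is. A torsion element therefore either lies in $\Gamma_0$, where it must be trivial, or maps to the nontrivial class of $\Z/2$, i.e.\ has odd $t$-exponent $a$. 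For such an element I would compute, using $t^au^mv^nt^{-a}=u^{-m}v^{-n}$ (valid since $a$ is odd) and $[u^m,v^n]=t^{4mn}$, that
\[
(u^mv^nt^a)^2=u^mv^n\,(t^au^mv^nt^{-a})\,t^{2a}=u^mv^n\,u^{-m}v^{-n}\,t^{2a}=[u^m,v^n]\,t^{2a}=t^{2a+4mn}.
\]
Since $t$ has infinite order (as $t^2\in\Gamma_0$ is nontrivial) and $2a+4mn\equiv 2\pmod 4$, this square is nontrivial. Hence there are no involutions outside $\Gamma_0$, so a torsion element of even order would produce a nontrivial torsion element of $\Gamma_0$; thus $\Gamma$ is torsion-free.

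For non-left-orderability I would apply Proposition~\ref{thm:criterion} to the generating set $\{u,v,t\}$ and show that for \emph{every} sign assignment the identity already lies in the generated semigroup. The engine is the pair of identities, valid for all $\epsilon,\delta,\eta\in\{-1,1\}$ and derived purely from the defining relations $t^\eta u^\epsilon t^{-\eta}=u^{-\epsilon}$, $t^\eta v^\delta t^{-\eta}=v^{-\delta}$ and $[u^\epsilon,v^\delta]=t^{4\epsilon\delta}$:
\[
u^\epsilon v^\delta\,t^\eta\,u^\epsilon v^\delta=[u^\epsilon,v^\delta]\,t^\eta=t^{\eta+4\epsilon\delta},\qquad v^\delta u^\epsilon\,t^\eta\,v^\delta u^\epsilon=[v^\delta,u^\epsilon]\,t^\eta=t^{\eta-4\epsilon\delta}.
\]
Given a sign pattern $(\epsilon,\delta,\eta)$ for $(u,v,t)$: if $\epsilon\delta\eta=-1$ I use the first identity and append three copies of $t^\eta$, obtaining the positive word $u^\epsilon v^\delta t^\eta u^\epsilon v^\delta(t^\eta)^3=t^{4(\eta+\epsilon\delta)}=1$; if $\epsilon\delta\eta=+1$ the second identity gives the positive word $v^\delta u^\epsilon t^\eta v^\delta u^\epsilon(t^\eta)^3=t^{4(\eta-\epsilon\delta)}=1$. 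Each is a positive word in $u^\epsilon,v^\delta,t^\eta$ equal to the identity, so the corresponding semigroup contains $1$. Since this exhausts all eight patterns, Proposition~\ref{thm:criterion} shows that $\Gamma$ is not left-orderable.

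The main obstacle is locating relations that defeat every sign pattern simultaneously. The symmetric ``sandwich'' $u^\epsilon v^\delta t^\eta u^\epsilon v^\delta$ by itself only kills the four patterns with $\epsilon\delta\eta=-1$, since its commutator defect $4\epsilon\delta$ always carries the sign of $\epsilon\delta$; the observation that unlocks the remaining four patterns is that reversing the order to $v^\delta u^\epsilon$ flips the defect to $-4\epsilon\delta$, because $[v^\delta,u^\epsilon]=[u^\epsilon,v^\delta]^{-1}$. Thus for any prescribed signs exactly one of the two orderings yields $t^{-3\eta}$, which is then cancelled by three further positive copies of $t^\eta$. I would double-check the normal-form bookkeeping—in particular $t^au^mv^nt^{-a}=u^{-m}v^{-n}$ for odd $a$ and $[u^m,v^n]=t^{4mn}$—and verify the eight cases explicitly, but once these two identities are in hand no genuinely hard step remains.
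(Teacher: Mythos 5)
Your proof is correct. The torsion-freeness half coincides with the paper's argument: the same normal form (every element of $\Gamma$ is $u^mv^nt^a$, i.e.\ $x^{2p}y^qz^r$ or $tx^{2p}y^qz^r$ inside $G$) and the same computation that the square of an element with odd $t$-exponent is $t^{2a+4mn}$, nontrivial because the exponent is $\equiv 2 \pmod 4$. The non-left-orderability half takes a genuinely different route. The paper argues directly with a hypothetical left-order: after normalizing $t>1$ it deduces $z=t^2>1$, hence $tx^n>1$ for even $n$ (since its square is $z$), hence $tx^ny^m>1$ for a suitable sign of $m$; the computation $(tx^ny^m)^2=z^{mn+1}$ then gives a contradiction once $n$ is chosen with $mn+1<0$. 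You instead run the semigroup criterion of Proposition~\ref{thm:criterion} on the generating triple $\{u,v,t\}$ and, for each of the eight sign patterns, exhibit an explicit positive word equal to $1$, using that reversing the order of $u^\epsilon$ and $v^\delta$ flips the commutator defect $t^{\pm 4\epsilon\delta}$ so that three additional copies of $t^\eta$ cancel exactly. Both arguments turn on the same underlying identity (essentially $(u^\epsilon v^\delta t^\eta)^2=t^{2\eta+4\epsilon\delta}$ versus $(tx^ny^m)^2=z^{mn+1}$); yours produces a concrete ``positive word equals $1$'' certificate for every sign assignment and uses only the easy direction of the criterion, in the same spirit as the paper's Section~1 examples, whereas the paper's version is shorter because the freedom to take $|n|$ large lets a single ordering of the letters suffice. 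The one step I would make explicit is that $[u^\epsilon,v^\delta]=t^{4\epsilon\delta}$ for all four sign choices requires $t^4$ to be central; this follows either from the embedding into $G$ (where $t^2=z$) or directly from the relations via $t^2ut^{-2}=u$ and $t^2vt^{-2}=v$.
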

 
 \begin{proof} Assume that $<$ is a left-order on $\Gamma $. Without loss of generality, we may assume that $t > 1$. Then $1 < t < z$ thus $z$ is also a positive element. On the other hand, let us observe that for all $n\in 2\Z $, we have $(tx^n)^2 = t^2$ thus the element $tx^n$ is positive for every even integer $n$. Let $m$ be positive if $y > 1$ and negative if $y < 1$. Then, for all $m\in 2\Z$, $tx^ny^m$ is positive as a product of two positive elements $tx^n$ and $y^m$. Then $(tx^ny^m)^2 > 1$. However, $$(tx^ny^m)^2 = t^2x^{-n}y^{-m}x^ny^m = t^2z^{mn} = z^{mn+1}.$$
 
 We can choose $n$ so that $mn + 1 < 0$. This yields $(tx^ny^m)^2 < 1$. Contradiction.
 
 \medskip
 
 To see torsion-freeness let us observe that any element $g\in G$ can be written as $g = x^{2p}y^qz^r$ or $g = tx^{2p}y^qz^r$. The element $x^{2p}y^qz^r$ is not a torsion, since $H$ is torsion-free. As for the element $tx^{2p}y^qz^r$, we have $(tx^{2p}y^qz^r)^2 = x^{-2p}y^{-q}x^{2p}y^qz^{2r+1} = z^{\pm 2pq}z^{2r+1} \neq 1.$ Thus, $\Gamma $ is torsion-free.
  \end{proof}
  
 \medskip

{\em Acknowledgment: } We are very thankful to Zipei Nie for reading the draft of this paper and for correcting the errors.

   \bigskip

 \end{document}